\newcommand{\bburl}[1]{\textcolor{blue}{\url{#1}}}
\numberwithin{part}{section}
\numberwithin{equation}{section}
\newtheorem{lemma}[part]{Lemma}
\newtheorem{question}[part]{Question}
\newtheorem{defn}[part]{Definition}
\newtheorem{rek}[part]{Remark}
\newtheorem{thm}[part]{Theorem}
\newtheorem{conjecture}[part]{Conjecture}
\newtheorem{cor}[part]{Corollary}
\newcommand{\Z}{{\mathbb Z}}
\newcommand{\N}{{\mathbb N}}
\newcommand{\F}{{\mathbb F}}
\title{Subsets of $\F_q[x]$ Free of 3-term Geometric Progressions}
\author[Asada]{Megumi Asada}
\email{\textcolor{blue}{\href{mailto:maa2@williams.edu}{maa2@williams.edu}}}
\address{Department of Mathematics and Statistics, Williams College, Williamstown, MA 01267}
\author[Fourakis]{Eva Fourakis}
\email{\textcolor{blue}{\href{mailto:erf1@williams.edu}{erf1@williams.edu}}}
\address{Department of Mathematics and Statistics, Williams College, Williamstown, MA 01267}
\author[Manski]{Sarah Manski}
\email{\textcolor{blue}{\href{mailto:Sarah.Manski12@kzoo.edu}{Sarah.Manski12@kzoo.edu}}}
\address{Department of Mathematics \& Computer Science, Kalamazoo College, Kalamazoo, MI 49006}
\author[McNew]{Nathan McNew}
\email{\textcolor{blue}{\href{mailto:nmcnew@towson.edu}{nmcnew@towson.edu}}}
\address{Department of Mathematics, Towson University, Towson, MD 21252}
\author[Miller]{Steven J. Miller}
\email{\textcolor{blue}{\href{mailto:sjm1@williams.edu}{sjm1@williams.edu}},  \textcolor{blue}{\href{Steven.Miller.MC.96@aya.yale.edu}{Steven.Miller.MC.96@aya.yale.edu}}}
\address{Department of Mathematics and Statistics, Williams College, Williamstown, MA 01267}
\author[Moreland]{Gwyneth Moreland}
\email{\textcolor{blue}{\href{mailto:gwynm@umich.edu}{gwynm@umich.edu}}}
\address{Department of Mathematics, University of Michigan, Ann Arbor, MI 48109}
\date{\today}
\subjclass[2010]{05D10, 11B05 (primary), 11Y60, 11R58 (secondary), 11T55}
\keywords{Ramsey theory, function fields, geometric progressions, upper density.}
\begin{document}

\thanks{This research was conducted as part of the SMALL 2015 REU program. The authors were supported by Williams College, NSF grants DMS1347804 and DMS1265673, and the first named author was additionally supported by a Clare Boothe Luce scholarship. We thank our colleagues from the 2015 Williams SMALL REU for many helpful conversations. }

\begin{abstract}
Several recent papers have considered the Ramsey-theoretic problem of how large a subset of integers can be without containing any 3-term geometric progressions.  This problem has also recently been generalized to number fields, determining bounds on the greatest possible density of ideals avoiding geometric progressions. We study the analogous problem over $\F_q[x]$, first constructing a set greedily which avoids these progressions and calculating its density, and then considering bounds on the upper density of subsets of $\F_q[x]$ which avoid 3-term geometric progressions. This new setting gives us a parameter $q$ to vary and study how our bounds converge to 1 as it changes, and finite characteristic introduces some extra combinatorial structure that increases the tractibility of common questions in this area.
\end{abstract}

\maketitle

\tableofcontents

\section{Introduction}

In a 1961 paper Rankin \cite{Ran} introduced the idea of considering how large a set of integers can be without containing terms which are in geometric progression.  He constructed a subset of the integers which avoids 3-term geometric progressions and has asymptotic density approximately 0.719745.  Brown and Gordon \cite{BG} noted that the set Rankin considered was the set obtained by greedily including integers subject to the condition that such integers do not create a progression involving integers already included in the set.

Other authors, including Riddell \cite{Rid}, Biegelb\"{o}ck, Bergelson, Hindman and Strauss \cite{BBHS}, Nathanson and O'Bryant \cite{NO}, and McNew \cite{Mc}, have refined bounds for the upper density of a set which avoids geometric progressions.  Best, Huan, McNew, Miller, Powell, Tor and Weinstein \cite{BHMMPTW} generalized the problem to quadratic number fields. Using many of the techniques from these other works, they obtained similar results for the density of the ideals in the ring of integers which similarly avoid geometric progressions.

The purpose of \cite{BHMMPTW}  was to see how the results differed when considering subsets of number fields rather than the integers.  Here we investigate what happens over function fields of finite characteristic.  In particular, using combinatorial tools as well as the methods of Rankin, McNew, and Best et al., we consider the size of the largest subset of the polynomial ring $\F_q[x]$ which avoids geometric progressions whose common ratio is a non-unit polynomial in this ring. 

\begin{rek} It is worth remarking on the choice of problem. In the integer case it is interesting to study sets which avoid 3-term geometric progressions with integral ratio as these sets have a very different flavor from those sets (called primitive sets) which avoid 2-term progressions with integral ratio. The situation is richer over $\F_q[x]$, for example we now have many more units.  In what follows we will require that our sets avoid 3-terms in a non-unit progression.
\end{rek}

We begin by constructing and characterizing the set constructed greedily (with respect to norm) which avoids 3-term non-unit geometric progressions.  In Theorem~\ref{thm:greedydens} we show that this set has asymptotic density \begin{equation}  \left(1 - \frac{1}{q}\right) \prod_{i = 1}^{\infty} \prod_{n = 1}^{\infty} \left(1 + q^{-3^in}\right)^{m(n,q)}=\left(1 - \frac{1}{q}\right) \prod_{i = 1}^{\infty} \frac{1-q^{1-2\cdot 3^i}}{1-q^{1-3^i}},\end{equation}
where $m(n,q)$ denotes the number of monic irreducible polynomials over $\F_q$ of degree $n$. We then study bounds on the upper density of sets avoiding $3$-term non-unit progressions, and give the numerical values obtained for certain specific values of $q$.

\section{Construction of the Greedy Set}
Our goal is to construct a subset of significant density whose elements do not contain a three-term non-unit geometric progression. That is, a set that does not contain all of $b(x), r(x)b(x)$ and $r(x)^2b(x)$ for some $b(x),r(x) \in \F_q[x]$ where $r(x)$ is not a unit of the ring. Observe that $\F_q[x]$ is a unique factorization domain, and so each $f(x) \in \F_q[x]$ can be writen uniquely as \begin{equation} f(x) \ = \ u P_1^{e_1}(x) \cdots P_k^{e_k}(x),\end{equation}
where $P_1(x),\cdots,P_k(x)$ are monic irreducible polynomials (primes) and $u \in \F_q$ is a unit. 

Let $A_3^*(\Z) \ = \ \{0,1,3,4,9,10,12,13, \dots\}$ denote the set of non-negative integers formed by greedily constructing a set free of 3-term arithmetic progressions (see \cite{Ran}). Rankin observed that the set of integers whose prime factors only have exponents coming from the set $A_3^*(\Z)$, the set obtained by greedily constructing a set to avoid containing any geometric progressions, has a relatively high density, $0.719745$.  We wish to similarly construct a greedy subset of $\F_q[x]$ to avoid geometric progressions, and consider its density.  We will see that the resulting set can be characterized in a way similar to Rankin's set as those elements whose factorizations only have exponents contained in the set $A_3^*(\Z)$. We will refer to this greedy set $G_3^*(\F_q[x])$  as $G_{3,q}^*$ for brevity. Remember that for our construction, unit progressions are ignored.

More precisely, we let $G_{3,q}^*$ be the subset of $\F_q[x]$ formed by taking the set $\F_q$, and greedily adding in elements by increasing degree, subject to the condition that included elements are not part of a 3-term geometric progression involving elements of smaller degree already included in $G_{3,q}^*$.

We will see that, similarly to Rankin's set, the elements of $\F_q[x]$ that are included in $G_{3,q}^*$ are those polynomials with factorizations \begin{equation}
f(x) = uP_1(x)^{e_1} \cdots P_k(x)^{e_k}
\end{equation} where each exponent $e_i$ on a monic irreducible factor $P_i(x)$ is in $A_3^*(\Z)$. In order to show this we must first understand the behavior of the units of this ring. Consider the three terms
\begin{equation} f(x), \ ur(x)f(x), \ vr^2(x)f(x)\end{equation}
 where $u$ and $v$ are units. Suppose $f(x), \ ur(x)f(x) \in G_{3,q}^*$. The exponents appearing on the irreducible factors of $f(x), \ ur(x)f(x)$ and  $vr(x)^2f(x)$ contain an arithmetic progression, and thus we would like to conclude that the element $vr(x)^2f(x)$ must be excluded from the greedy set. However, it is not immediately clear that there exists another ratio $\rho(x)$ and a polynomial $g(x)$ such that $g(x), \rho(x)g(x) \in G_{3,q}^*$ and $\rho(x)^2g(x) = vr(x)^2f(x)$, thus requiring us to exclude this polynomial from the greedy set. Nevertheless such a progression does in fact exist as is seen from the following theorem, which allows us to characterize the greedy set $G_{3,q}^*$ by its elements' irreducible factors.

\begin{thm}\label{characterization}
Let $G^*_{3,q}$ be the greedily constructed subset of $\F_q[x]$ described above. Then the polynomials contained in $G_{3,q}^*$ are precisely those polynomials \begin{equation}
f(x) = uP_1(x)^{e_1} \cdots P_k(x)^{e_k}
\end{equation} where each $P_i(x)$ is monic and irreducible, $u\in \F_q$ is a unit and $e_i \in A_3^*(\Z)$ for all $i$.
\end{thm}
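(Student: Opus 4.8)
The plan is to let $S$ denote the set of nonzero polynomials $u\prod_i P_i^{e_i}$ (with $P_i$ monic irreducible, $u$ a unit) all of whose exponents $e_i$ lie in $A_3^*(\Z)$, and to prove $G_{3,q}^* = S$ by strong induction on degree. First I would record the structural observation that makes the induction legitimate: in any non-unit progression $b, rb, r^2b$ we have $\deg b < \deg(rb) < \deg(r^2b)$, since $\deg r \geq 1$. Hence a polynomial is only ever excluded as the \emph{largest} term of a progression, whose two smaller terms have strictly smaller degree; so admission of a degree-$n$ polynomial depends only on which polynomials of degree $<n$ were admitted, and the order in which equal-degree polynomials are processed is irrelevant. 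Throughout I would pass between a polynomial and its exponent data, using that multiplying a progression through by $r$ adds the exponent vector of $r$ to each term, so a geometric progression is the same thing as a simultaneous arithmetic progression in every prime's exponent, and the progression is non-unit exactly when some common difference is positive.

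Next I would handle the easy inclusion $S \subseteq G_{3,q}^*$, which rests on showing $S$ is itself free of $3$-term non-unit progressions. Indeed, if $b, rb, r^2b \in S$ with $r$ a non-unit, then for a prime dividing $r$ the exponents $a, a+d, a+2d$ with $d \geq 1$ form a genuine $3$-term arithmetic progression inside $A_3^*(\Z)$, contradicting that $A_3^*(\Z)$ is $3$-AP-free. Granting the inductive hypothesis $G_{3,q}^* \cap \{\deg < n\} = S \cap \{\deg < n\}$, suppose a degree-$n$ element $h \in S$ were excluded. Then $h = \rho^2 g$ for some non-unit $\rho$ with $g, \rho g \in G_{3,q}^*$, and since these have degree $<n$ they lie in $S$ by induction; but then $g, \rho g, h$ is a non-unit progression lying entirely in $S$, which is impossible. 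Thus every degree-$n$ element of $S$ is admitted.

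The heart of the argument is the reverse inclusion, established in contrapositive form: $h \notin S \Rightarrow h \notin G_{3,q}^*$. The arithmetic input I would invoke is the defining property of Rankin's greedy set, namely that every $e \notin A_3^*(\Z)$ admits a representation $e = a + 2d$ with $a, a+d \in A_3^*(\Z)$ and $d \geq 1$ (this is precisely why $e$ was skipped over). Writing $h = u\prod_i P_i^{e_i}$ and $B = \{i : e_i \notin A_3^*(\Z)\} \neq \emptyset$, I would choose such a representation $e_i = a_i + 2d_i$ for each $i \in B$ and set the \emph{monic} ratio $\rho = \prod_{i\in B} P_i^{d_i}$ together with $g = h/\rho^2$. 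One then checks that $g = u\prod_{i\in B}P_i^{a_i}\prod_{i\notin B}P_i^{e_i}$ and $\rho g = u\prod_{i\in B}P_i^{a_i+d_i}\prod_{i\notin B}P_i^{e_i}$ both lie in $S$, have degree $<n$, and satisfy $\rho^2 g = h$ with $\rho$ a non-unit; by induction $g, \rho g \in G_{3,q}^*$, so the greedy rule forces $h$ out.

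I expect this last construction to be the only real obstacle, and it is exactly the difficulty the authors flag. The worry is that the naive progression $f, urf, vr^2f$ need not be a genuine geometric progression because of a unit mismatch, and that repairing one out-of-$A_3^*(\Z)$ exponent need not fix the others. Both issues dissolve with the right choice: taking $\rho$ \emph{monic} guarantees that $g = h/\rho^2$ inherits the same leading unit $u$ as $h$, so no unit correction is ever required, and taking $\rho$ to be the product over \emph{all} bad primes at once repairs every exponent of $h$ simultaneously. The only point demanding care is verifying that $g$ and $\rho g$ genuinely land back in $S$ at every prime — the corrected primes by the representation $e_i = a_i + 2d_i$, and the untouched primes because their exponents were already in $A_3^*(\Z)$. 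The degree bookkeeping and the base case (degree-$0$ constants, which are units and lie in $S$ vacuously, with $0$ treated as a trivial edge case) are routine.
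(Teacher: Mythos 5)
Your proposal is correct and follows essentially the same route as the paper: induction on degree, with the key step being the construction of the monic ratio $\rho=\prod_{i\in B}P_i^{d_i}$ built from all bad exponents at once (using that each $e\notin A_3^*(\Z)$ completes an arithmetic progression $a, a+d, e$ with $a, a+d\in A_3^*(\Z)$), so that $h/\rho^2,\ h/\rho,\ h$ is a genuine geometric progression whose two smaller terms lie in the set by induction. Your explicit remarks on the monic choice of $\rho$ resolving the unit issue and on exclusion only ever happening at the largest term of a progression are exactly the points the paper handles implicitly, so there is no substantive difference.
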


\begin{proof}
We proceed by induction on the degree, $n$ of the polynomials included in $G_{3,q}^*$. Note that for $n \in \{0,1\}$, the statement is true since there are no non-unit geometric progressions among the polynomials of degree at most 1. Now suppose that all polynomials of degree at most $N$ satisfy the inductive hypothesis. Consider $g(x) \ = \ uP_1(x)^{e_1} \cdots P_k(x)^{e_k} \in \F_q[x]$ a polynomial of degree $n=N+1$ with each $P_i(x)$ monic and irreducible and at least one exponent $e_i$ not in $A_3^*(\Z)$. 

For each $e_i \not\in A_3^*(\Z)$ there exist $a_i$, $b_i \in A_3^*(\Z)$ such that $a_i,b_i,e_i$ forms an arithmetic progression with common difference $d_i$. That is $b_i=a_i+d_i$, $e_i=a_i+2d_i$.  Let \[r(x) = \prod_{\substack{i\leq k\\e_i \notin A_3^*(\Z)}} P_i(x)^{d_i}.\]  Now, consider the geometric progression, \begin{equation}
\frac{g(x)}{r(x)^2} \ ,\ \frac{g(x)}{r(x)} \ , \ g(x).  
\end{equation}
Note that each of these terms is contained in $\F_q[x]$, and, by construction, all of the  exponents appearing in the factorizations of the first two terms are in $A_3^*(\Z)$.  They also have degree less than or equal to $n$ and so by the inductive hypotheses, are already contained in $G_{3,q}^*$.  Thus the element $g(x)$ is not included in $G^*_{3,q}$.

Now suppose $h(x) \ = \ uP_1^{e_1}(x) \cdots P_k^{e_k}(x) \in \F_q[x]$ is a polynomial of degree $n$ with each exponent $e_i \in A_3^*(\Z)$. Note that $h(x)$ cannot form a geometric progression with any other two elements of degree at most $N$ in $G_{3,q}^*$, because the exponents on the irreducible factors of any terms in geometric progression would form an arithmetic progression and the polynomials in $G^*_{3,q}$ are constructed so that is not possible.
\end{proof}

\section{Computing Densities}

\subsection{Set-Up}
We now consider the density of the set $G_{3,q}^*$, and obtain a function that gives this density as a function of $q$.  We give first several useful definitions we will need to investigate this density.


\begin{defn} The norm of an element $f(x) \in \F_q[x]$ is
\begin{equation} N( f ) = N(\langle f \rangle) \ = \  |\F_q[x]/ \langle f(x) \rangle| \ = \ q^{\deg f}.\end{equation}
In particular, the norm of an element is the norm of the ideal it generates. Note that the norm is multiplicative.
\end{defn}

\begin{defn}\label{def:normle}
Let $S(n) := \{f(x) \in \F_q[x] : N(f) \le n\}$. Similarly, for ${X \subseteq \N}$ we have $S(X) = {\{f(x) \in \F_q[x] : N(f) \in X\}}$.
\end{defn}
It will also be convenient to have notation to refer to the set of monic irreducible polynomials.
\begin{defn}\label{def:iqn}
Let $I_q := \{f(x) \in \F_q[x] : f(x) \text{ is monic irreducible}\}$, and let $I_{q,n} := \{f(x) \in I_q : \deg f = n\}$.  When we refer to the number of monic irreducible polynomials of degree $n$ we will use the notation $m(q,n) = |I_{q,n}|$.
\end{defn}

\begin{defn}
The (asymptotic) density of a set $A \subseteq \F_q$ is defined as
\begin{equation} \label{Equation:Zeta} d(A) \ = \ \lim_{n \to \infty} \frac{|A \cap S(n)|}{|S(n)|}\end{equation}
(provided that this limit exists).
\end{defn}

For our density formulas, we require a Riemann zeta-type function for $\F_p[x]$.

\begin{defn}\label{zetadef} For $\F_q[x]$, define
\begin{equation} \zeta_q(s) \ := \ \sum_{\substack{f(x) \in \F_q[x]\\f(x) \text{ \emph{monic}}}} \frac{1}{N(f)^{s}}.\end{equation}
This is an analogue of the Riemann zeta function, and likewise has its own Euler product:
\begin{equation}\label{Equation:Eulerprod}
\zeta_q(s) \ = \ \prod_{p(x) \in I_q} \frac{1}{1 - N(p)^{-s}} \ = \ \prod_{p(x) \in I_q} \frac{1}{1 - q^{-s \deg p}}.
\end{equation}
\end{defn}

We can rewrite $\zeta_q(s)$ using a fact from elementary number theory.

\begin{lemma} \label{degreesproduct} We have
\begin{align*}\zeta_q(s) \ = \ \prod_{n = 1}^{\infty} \frac{1}{(1 - q^{-ns})^{m(n,q)}}=\frac{1}{1-q^{1-s}}.
\end{align*}
\end{lemma}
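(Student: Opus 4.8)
The plan is to establish the two equalities separately: obtain the middle product by regrouping the Euler product according to degree, and obtain the closed form on the right by summing the defining Dirichlet series directly.

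First I would derive the middle expression. Starting from the Euler product in \eqref{Equation:Eulerprod}, I group the factors according to the degree of the irreducible polynomial $p(x)$. Since $N(p) = q^{\deg p}$, every monic irreducible of degree $n$ contributes the identical factor $(1 - q^{-ns})^{-1}$, and by Definition~\ref{def:iqn} there are exactly $m(n,q)$ such polynomials. Collecting the repeated factors gives $\prod_{n=1}^{\infty}(1 - q^{-ns})^{-m(n,q)}$, which is precisely the middle expression. This step is pure bookkeeping once the Euler product is granted, and the product begins at $n=1$ because there are no irreducibles of degree $0$.

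For the closed form, I would return to the definition of $\zeta_q(s)$ as a sum over monic polynomials and evaluate it by counting rather than by factoring. The elementary fact I need is that there are exactly $q^n$ monic polynomials of degree $n$ in $\F_q[x]$, since the leading coefficient is fixed to be $1$ and the remaining $n$ coefficients range freely over $\F_q$. As each such polynomial has norm $q^n$, grouping the defining sum by degree yields
\[
\zeta_q(s) \ = \ \sum_{n=0}^{\infty} q^n \, q^{-ns} \ = \ \sum_{n=0}^{\infty} \left(q^{1-s}\right)^n \ = \ \frac{1}{1 - q^{1-s}},
\]
where the $n=0$ term accounts for the constant polynomial $1$ and the geometric series is summed in closed form.

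The only genuine subtlety, rather than an obstacle, is convergence: both the Euler product and the Dirichlet sum must be shown to converge absolutely in the region $\Re(s) > 1$ (equivalently $\lvert q^{1-s}\rvert < 1$) before rearranging the product and summing the series are justified, and elsewhere the stated identity is understood as an equality of meromorphic continuations. Chaining the two computations of $\zeta_q(s)$ then also produces, as a by-product, the classical identity $\prod_{n=1}^{\infty}(1 - q^{-ns})^{-m(n,q)} = (1 - q^{1-s})^{-1}$, which implicitly encodes the elementary number theory fact giving $m(n,q)$.
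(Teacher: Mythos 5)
Your proof is correct and follows essentially the same route as the paper: the first equality by collecting the Euler product factors over irreducibles of each degree $n$, and the second by counting the $q^n$ monic polynomials of degree $n$ and summing the resulting geometric series $\sum_{n \ge 0} (q^{1-s})^n$. Your explicit remarks on absolute convergence are a minor (and reasonable) addition, but not a different approach.
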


\begin{proof}

The first expression follows from collecting the degree $n$ irreducible polynomials in the Euler product expression in Definition \ref{zetadef}.  The second follows from \eqref{Equation:Eulerprod} and the fact that there are exactly $q^d$ monic polynomials of norm $q^d$.
\end{proof}

\subsection{Density of the Greedy Set}
Let $P(x) \in \F_q[x]$ be a monic, irreducible polynomial. Then, as described in \cite{BHMMPTW}, the density of those polynomials in $\F_q[x]$ exactly divisible by $P(x)^k$, for some  $k \in A_3^*(\Z)$, is
\begin{equation}\left(\frac{N(P)-1}{N(P)}\right) \sum_{i \in A_3^*(\Z)} \frac{1}{N(P)^i} \ = \ \left(1 - \frac{1}{N(P)}\right) \prod_{i = 0}^{\infty} \left(1 + \frac{1}{N(P)^{3^{i}}}\right). \label{Equation:PrimeDensity}
\end{equation}

\begin{thm} \label{thm:greedydens} Define
\begin{equation} \mathcal{F}(t) \ := \ \left(1 - \frac{1}{t}\right) \prod_{i = 0}^{\infty} \left(1 + \frac{1}{t^{3^{i}}}\right).\end{equation}
The asymptotic density of the greedy set $G_{3,q}^* \subseteq \F_q[x]$, denoted $d(G_{3,q}^*)$, is
\begin{align}
d(G_{3,q}^*) &\ = \ \prod_{\substack{P \in I_q }}  \mathcal{F}(N(P)) \label{densfirst} \\
&\ = \ \frac{1}{\zeta_q(2)} \prod_{i = 1}^{\infty} \frac{\zeta_q(3^i)}{\zeta_q(2 \cdot 3^i)} \label{denssec}\\
&\ = \ \left(1 - \frac{1}{q}\right) \prod_{i = 1}^{\infty} \prod_{n = 1}^{\infty} \left(1 + q^{-3^in}\right)^{m(n,q)} \label{densthird} \\
&\ = \ \left(1 - \frac{1}{q}\right) \prod_{i = 1}^{\infty} \frac{1-q^{1-2\cdot 3^i}}{1-q^{1-3^i}}. \label{densfourth}
\end{align}
\end{thm}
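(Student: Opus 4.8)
The plan is to prove the first equality \eqref{densfirst} by a sieve argument and then derive \eqref{denssec}--\eqref{densfourth} by purely algebraic rearrangements of the resulting Euler product, invoking Lemma~\ref{degreesproduct}. By Theorem~\ref{characterization}, a monic polynomial $f$ lies in $G_{3,q}^*$ exactly when its $P$-adic valuation $v_P(f)$ lies in $A_3^*(\Z)$ for every monic irreducible $P$; since both $G_{3,q}^*$ and $\F_q[x]$ are stable under multiplication by the units of $\F_q$, the unit factor cancels in \eqref{Equation:Zeta} and it suffices to compute the density among monic polynomials. For a single $P$, exactly $q^{n-d}$ of the $q^n$ monic polynomials of degree $n$ are divisible by a fixed monic polynomial of degree $d$, so the density of those monic polynomials exactly divisible by $P^k$ is $(1 - N(P)^{-1})N(P)^{-k}$; summing over $k \in A_3^*(\Z)$ recovers the local density $\mathcal{F}(N(P))$ of \eqref{Equation:PrimeDensity}.

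First I would establish the finite truncation. For a fixed degree bound $D$, the Chinese Remainder Theorem gives genuine independence of the valuation conditions at the finitely many primes of degree at most $D$, so the set $\bigcap_{\deg P \le D}\{v_P(f) \in A_3^*(\Z)\}$ has density equal to the finite product $\prod_{\deg P \le D}\mathcal{F}(N(P))$, the interchange of the sum over valuation tuples with the density limit being justified by the geometric tails $\sum_{k}N(P)^{-k}$. Since $G_{3,q}^*$ is contained in this set for every $D$, its upper density is at most $\prod_{\deg P \le D}\mathcal{F}(N(P))$, and hence at most the full product $\prod_{P \in I_q}\mathcal{F}(N(P))$.

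For the matching lower bound I would control the tail coming from large primes. A monic polynomial lying in $\bigcap_{\deg P \le D}\{v_P(f)\in A_3^*(\Z)\}$ but not in $G_{3,q}^*$ must have $v_P(f) \notin A_3^*(\Z)$ for some $P$ with $\deg P > D$; as $2$ is the least positive integer outside $A_3^*(\Z)$, such an $f$ is divisible by $P^2$ for some $\deg P > D$. Hence the discrepancy has density at most $\sum_{\deg P > D} N(P)^{-2} = \sum_{n > D} m(n,q)\,q^{-2n}$, which is the tail of a convergent series and tends to $0$ as $D \to \infty$. Letting $D \to \infty$ squeezes the upper and lower densities together, proving that $d(G_{3,q}^*)$ exists and equals \eqref{densfirst}. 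This sieve and tail estimate is the only genuinely analytic step, and is where I expect the main difficulty to lie; the remainder is formal manipulation.

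Finally, to pass from \eqref{densfirst} through \eqref{densfourth} I would rewrite each local factor using $1 + x = (1 - x^2)/(1 - x)$. Applying this with $x = N(P)^{-3^i}$ to each factor of $\prod_{i \ge 0}(1 + N(P)^{-3^i})$ telescopes the $i=0$ term against the prefactor $(1 - N(P)^{-1})$, leaving $\mathcal{F}(N(P)) = (1 - N(P)^{-2})\prod_{i \ge 1}\frac{1 - N(P)^{-2\cdot 3^i}}{1 - N(P)^{-3^i}}$. Taking the product over $P$ and recognizing each $\prod_P(1 - N(P)^{-s})^{-1}$ as $\zeta_q(s)$ yields \eqref{denssec}. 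Expanding instead through the first formula of Lemma~\ref{degreesproduct}, $\zeta_q(s) = \prod_n (1 - q^{-ns})^{-m(n,q)}$, and again using $1 - x^2 = (1-x)(1+x)$ turns each ratio $\zeta_q(3^i)/\zeta_q(2\cdot 3^i)$ into $\prod_n (1 + q^{-3^i n})^{m(n,q)}$, giving \eqref{densthird}, where the prefactor is $1/\zeta_q(2) = 1 - 1/q$ by the closed form of Lemma~\ref{degreesproduct}. Substituting that same closed form $\zeta_q(s) = (1 - q^{1-s})^{-1}$ directly into \eqref{denssec} produces \eqref{densfourth}, closing the chain. All rearrangements of these infinite products are legitimate by absolute convergence, which follows from $\sum_P N(P)^{-s} < \infty$ for the exponents $s \ge 2$ that occur.
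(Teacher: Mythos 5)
Your proposal is correct and follows essentially the same route as the paper: compute the local density $\mathcal{F}(N(P))$ at each monic irreducible, pass to an Euler product via the Chinese Remainder Theorem, and then perform the identical algebraic rearrangements (telescoping $1+x = (1-x^2)/(1-x)$ and the two forms of $\zeta_q$ from Lemma~\ref{degreesproduct}) to obtain \eqref{denssec}--\eqref{densfourth}. The only difference is that you make the CRT step rigorous with a truncation-and-tail argument, bounding the discrepancy by $\sum_{\deg P > D} N(P)^{-2} = \sum_{n>D} m(n,q)q^{-2n} \to 0$, a detail the paper's proof leaves implicit rather than a genuinely different approach.
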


\begin{proof}
Because inclusion in the set $G^*_{3,q}$ is determined only by the powers of the irreducible elements in the factorization of its elements, the Chinese remainder theorem implies that the density of this set is given by an Euler product over all of the irreducible polynomials of $\F_q[x]$.  The contribution in the product of each irreducible polynomial, $P(X)$, is the density of those polynomials exactly  divisible by an admissible power of $P(x)$, given in \eqref{Equation:PrimeDensity}, which we abbreviate as $\mathcal{F}(N(P))$. The resulting Euler product is the first line  \eqref{densfirst} above. 

The second line \eqref{denssec} follows from some algebraic manipulation:
\begin{align}
d(G_{3,q}^*) &\ = \ \prod_{P \in I_q} \left[\left(1 - \frac{1}{N(P)}\right) \prod_{i = 0}^{\infty} \left(1 + \frac{1}{N(P)^{3^{i}}}\right)\right]\nonumber\\
&\ = \ \prod_{P\in I_q} \left[\left(1 - \frac{1}{N(P)^2}\right)  \prod_{i = 1}^{\infty} \left( \frac{1 - \tfrac{1}{N(P)^{2 \cdot 3^i}}}{1 - \tfrac{1}{N(P)^{3i}}}\right) \right]\nonumber\\
&\ = \ \frac{1}{\zeta_q(2)}\prod_{i = 1}^{\infty} \prod_{P \in I_q} \left( \frac{1 - {N(P)^{-2 \cdot 3^i}}}{1 - {N(P)^{-3^i}}}\right)\nonumber\\
&\ = \ \frac{1}{\zeta_q(2)}\prod_{i = 1}^{\infty} \frac{\zeta_q(3^i)}{\zeta_q(2 \cdot 3^i)}. \label{Equation:ZetaGreedyDens}
\end{align}

The third \eqref{densthird} and fourth \eqref{densfourth} equations follow by expanding \eqref{denssec} using the two representations for $\zeta_q(s)$ in  Lemma \ref{degreesproduct}, and the fact that $1/\zeta_q(2) \ = \ {(1 - q^{-1})}$:
\begin{align}
d(G_{3,q}^*) &\ = \  \frac{1}{\zeta_q(2)}\prod_{i = 1}^{\infty} \frac{\zeta_q(3^i)}{\zeta_q(2 \cdot 3^i)}\nonumber\\
&\ = \ \left(1 - \frac{1}{q}\right) \prod_{i = 1}^{\infty}\prod_{n = 1}^{\infty} \left(\frac{1 - q^{-2\cdot 3^i n}}{1 - q^{-3^i n}}\right)^{m(n,q)}\nonumber\\
&\ = \ \left(1 - \frac{1}{q}\right) \prod_{i = 1}^{\infty}\prod_{n = 1}^{\infty}  \left(1 + q^{-3^i n}\right)^{m(n,q)}.
\end{align}
\end{proof}

We can truncate the product in the fourth expression at a suitable length to calculate $d(G_{3,q}^*)$ for various finite fields up to a few decimal places. See Table \ref{densitytable} and Figure \ref{densityfigure}.
\begin{table}[ht!]
\caption{Density of $G_{3,q}^*$, calculated to 6 decimal places.}
\label{densitytable}
\begin{tabular}{ |c | c |}
\hline
$q$ & $d(G_{3,q}^*)$ \\
\hline
2 & .648361 \\
\hline
4 & .799231 \\
\hline
8 & .888862 \\
\hline
\end{tabular} \ \ \ \ \
\begin{tabular}{ |c | c |}
\hline
$q$ & $d(G_{3,q}^*)$ \\
\hline
\ 3 & .747027 \\
\hline
\ 9 & .899985 \\
\hline
27 & .964286 \\
\hline
\end{tabular} \ \ \ \ \
\begin{tabular}{ |c | c |}
\hline
$q$ & $d(G_{3,q}^*)$ \\
\hline
\ \ 5 & .833069 \\
\hline
\ 25 & .961538 \\
\hline
125 & .992063 \\
\hline
\end{tabular}  \ \ \ \ \
\begin{tabular}{ |c | c |}
\hline
$q$ & $d(G_{3,q}^*)$ \\
\hline
\ \ 7 & .874948 \\
\hline
\ 49 & .980000\\
\hline
343 & .997093 \\
\hline
\end{tabular}\end{table}

\begin{figure}[ht!]
\caption{Density of $G_{3,q}^*$ graphed against $q$ for $q \le 130$.}
\centerline{\includegraphics[height=12.5 em]{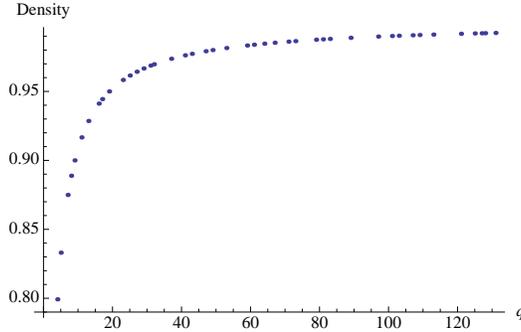}}\label{densityfigure}
\end{figure}

\section{Lower Bounds for the Supremum of Upper Densities}\label{Section:Lowerbds}

We start with a definition of upper density (see Definition \ref{def:normle} for the definition of $S(n)$ and $S(X)$ for $X \subseteq \N$).

\begin{defn}\label{Definition:Upperdensity}
The upper density of a set $A \subseteq \F_q$ is defined as
\begin{equation} \overline{d}(A) \ = \ \limsup_{n \to \infty} \frac{|A \cap S(n)|}{|S(n)|}.\end{equation}
Since the sequence on the right hand side is bounded and monotonic, $\overline{d}(A)$ always exists.
\end{defn}

As in prior work studying this problem over other rings, we wish to study the supremum of the upper densities of 3-term non-unit geometric progression-free sets. In particular, we would like to see how much larger the upper density of such a subset of $\F_q[x]$ can be made compared to the density of the set obtained greedily.  By constructing a set of sizeable upper density we get lower bounds for this supremum.  Our method is similar to that of the construction given first in \cite{Mc} and generalized in \cite{BHMMPTW}. However, due to the very nice multiplicative structure of the possible norms in $\F_q[x]$, (i.e., they are all of the form $q^n$) we can actually do slightly better than what is obtained by simply applying the technique used there to this setting.

The idea behind this construction is to leave certain gaps between norms of elements included in the set in order to guarantee that the set will be free of some geometric progressions.

\begin{lemma} \label{Lemma:Arithgreedy}The elements of $A_3^*(\Z) = \{0,\ 1,\ 3,\ 4,\ 9,\ 10,\ 12,\ 13,\  \dots\}$, the set constructed by greedily avoiding 3-term arithmetic progressions in $\Z$, are exactly those containing no 2 in their ternary expansion
\end{lemma}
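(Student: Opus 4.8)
The plan is to characterize $A_3^*(\Z)$ explicitly and show it coincides with the set of non-negative integers whose base-$3$ representation uses only the digits $0$ and $1$. I would prove this by induction on the greedy construction, showing that at each stage the greedy algorithm includes exactly those integers avoiding the digit $2$ in ternary. The key structural observation is that three integers $a, a+d, a+2d$ form an arithmetic progression precisely when their ``middle'' term is the average of the outer two, and that the condition of avoiding the digit $2$ interacts cleanly with this averaging operation.

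First I would set up the induction: suppose we have verified that every integer less than some bound $n$ is included in the greedy set if and only if its ternary expansion omits the digit $2$. To handle $n$ itself, I would argue in two directions. If $n$ contains a digit $2$ in its ternary expansion, I must exhibit a $3$-term arithmetic progression $a, b, n$ with $a, b$ already in the set (hence digit-$2$-free and smaller than $n$), forcing $n$ to be excluded. The natural candidate arises by locating the most significant digit equal to $2$ and using it to split $n$ appropriately; concretely, one looks for $a, b$ digit-$2$-free with $a + n = 2b$, i.e., $b$ is the average. Conversely, if $n$ is digit-$2$-free, I would show it cannot complete any arithmetic progression $a, b, n$ with $a, b$ both digit-$2$-free and smaller: if $a + n = 2b$ held with all three avoiding the digit $2$, the base-$3$ addition $a + n$ could involve no carries (since carries are exactly what the digit $2$ would signal in the doubled term $2b$), forcing $a = b = n$ digit by digit, a contradiction with $a < n$.

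The crux is the carry analysis in the identity $a + n = 2b$. The cleanest way to see it is to note that $2b$ in base $3$, when $b$ is digit-$2$-free, has digits in $\{0, 2\}$, and that writing $a + n$ with both summands digit-$2$-free (digits in $\{0,1\}$) produces, in the absence of carries, a digitwise sum landing in $\{0,1,2\}$; matching this against the $\{0,2\}$-digit pattern of $2b$ forces each digit of $a$ and $n$ to agree, giving $a = n$. Any carry would propagate and destroy the digit-$2$-free structure of the outer terms, which is precisely the obstruction. I expect this carry-propagation bookkeeping to be the main technical obstacle, since one must rule out all mixed carry patterns rather than a single clean case.

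I would remark that this digit characterization is classical and equivalent to the well-known description of the greedy $3$-AP-free set (the ``Stanley sequence'' starting from $0$), so an alternative to the direct induction is simply to cite the standard bijection between greedy $3$-AP-free integers and base-$3$ digit-$2$-avoiding integers. Either route yields the stated result, and the inductive argument above is self-contained enough to include without invoking external machinery.
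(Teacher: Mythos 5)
The paper itself does not prove this lemma: it simply declares the characterization ``well known'' and cites Rankin, which is exactly the fallback you mention in your last paragraph. Your self-contained induction is therefore a genuinely different (and more informative) route. Its core is sound, and the direction you identify as the crux is handled correctly: if $a$, $b$, $n$ are all digit-2-free and $a + n = 2b$, then both sides are computed without any carries in base $3$ (the digits of $a$ and $n$ lie in $\{0,1\}$, so digitwise sums stay below $3$; the digits of $b$ lie in $\{0,1\}$, so the digits of $2b$ lie in $\{0,2\}$), whence $a_i + n_i = 2b_i$ in every position $i$, forcing $a_i = n_i = b_i$ and hence $a = n$, contradicting $a < n$. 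Note that no carry ever occurs, so the ``carry-propagation bookkeeping'' you anticipate as the main obstacle is actually vacuous.

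The one genuine soft spot is the exclusion direction. Your plan of ``locating the most significant digit equal to $2$'' and splitting $n$ there does not work as stated: changing only that single digit leaves any lower-order $2$'s intact in the candidates $a$ and $b$, so they need not be digit-2-free, hence need not lie in the greedy set, and the induction does not apply to them. The standard construction that fits your framework is global rather than local: let $b$ be obtained from $n$ by replacing \emph{every} ternary digit $2$ with $1$, and $a$ by replacing every digit $2$ with $0$. Then $n - b = b - a = \sum_{i \colon n_i = 2} 3^i$, so $a$, $b$, $n$ form an arithmetic progression; both $a$ and $b$ are digit-2-free and strictly smaller than $n$ (there is at least one digit $2$), so by the inductive hypothesis they are already in the set and $n$ is excluded. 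With that substitution your induction closes and the argument is complete.
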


This is well known, see for example \cite{Ran}.

\begin{thm}\label{Theorem:T3q}Fix $q$ and consider the set $S(T_{3,q})$ where
\begin{equation}\label{Equation:Lowerbd}
T_{3,q} \ := \  \{ q^n : n \in A_3^*(\Z)\}.
\end{equation}
This set is free of geometric progressions and has upper density
\begin{equation}
\overline{d}(S(T_{3,q})) = m_q \ := \ \left(1-q^{-2}\right) \prod_{i = 1}^{\infty}\left(1+q^{-3^i}\right).
\end{equation}
Furthermore, no geometric-progression-free subset of $\mathbb{F}_q[x]$ of the form $S(X)$ for some $X \subset \N$ has upper density greater than $m_q$.
\end{thm}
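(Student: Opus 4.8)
The plan is to handle the three assertions of the theorem together by reducing everything to a single extremal problem about $3$-term arithmetic-progression-free subsets of $\N$. Since every norm in $\F_q[x]$ is a power of $q$, a set $S(X)$ depends only on which \emph{degrees} are allowed: writing $Y := \{n \in \Z_{\ge 0} : q^n \in X\}$, we have $S(X) = \{f \in \F_q[x] : \deg f \in Y\}$. First I would show that $S(X)$ is free of $3$-term non-unit geometric progressions if and only if $Y$ contains no $3$-term arithmetic progression of positive common difference. The forward direction is immediate, since a progression $b, rb, r^2 b$ has norms $q^{\deg b}, q^{\deg b + \deg r}, q^{\deg b + 2\deg r}$, whose exponents form an arithmetic progression of common difference $\deg r \ge 1$. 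For the reverse direction, any three degrees $d_0 < d_1 < d_2$ in arithmetic progression are realized by an actual non-unit progression, e.g.\ $x^{d_0}, x^{d_1}, x^{d_2}$ with ratio $x^{\,d_1 - d_0}$; hence if $Y$ had a $3$-term progression, $S(X)$ would contain a geometric one. This already gives the first assertion, since $T_{3,q}$ corresponds to $Y = A_3^*(\Z)$, which is progression-free by construction.

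Next I would convert the upper density into a weighted sum. There are exactly $(q-1)q^n$ polynomials of degree $n$, and $|S(q^N)| = q^{N+1}-1$, so
\begin{equation}
\overline{d}(S(X)) \ = \ \limsup_{N \to \infty} \frac{(q-1)\sum_{n \in Y,\, n \le N} q^n}{q^{N+1}-1}.
\end{equation}
Reindexing by $e = N - n$ and absorbing the vanishing lower-order terms, this equals $(1 - q^{-1})\limsup_{N\to\infty} \sum_{e \in E_N} q^{-e}$, where $E_N := \{\,N - n : n \in Y,\ n \le N\,\}$ is the reflection of $Y \cap [0,N]$ and hence is itself free of $3$-term progressions. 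Thus everything reduces to the claim that $A_3^*(\Z)$ maximizes $W(E) := \sum_{e \in E} q^{-e}$ among all progression-free $E \subseteq \Z_{\ge 0}$: granting this, $\overline{d}(S(X)) \le (1-q^{-1}) W(A_3^*(\Z)) = m_q$, the last equality following from Lemma \ref{Lemma:Arithgreedy}, which gives $\sum_{e \in A_3^*(\Z)} q^{-e} = \prod_{i=0}^\infty (1+q^{-3^i})$, together with $1-q^{-2} = (1-q^{-1})(1+q^{-1})$.

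The heart of the proof, and the step I expect to be the main obstacle, is this extremal lemma. My plan is an exchange argument driven by the rapid decay of the weights: because $q \ge 2$, each single weight dominates the whole tail,
\begin{equation}
q^{-j} \ \ge \ \sum_{e > j} q^{-e} \ = \ \frac{q^{-j}}{q-1}.
\end{equation}
I would take a progression-free maximizer $E^*$ of $W$ (one exists by a diagonalization/compactness argument, since $W \le 1/(1-q^{-1})$) and show it equals the greedy set $G = A_3^*(\Z)$. Let $j$ be the least integer at which they differ. The case $j \in E^* \setminus G$ is impossible: greediness excludes $j$ from $G$ only because $j$ completes a progression with two smaller elements of $G = E^* \cap [0,j)$, which would then already form a progression inside $E^*$. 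In the case $j \in G \setminus E^*$, adding $j$ to $E^*$ creates no progression using two smaller elements (as $j \in G$), so every newly created progression contains a term exceeding $j$; deleting that set $R$ of larger terms and inserting $j$ changes $W$ by $q^{-j} - \sum_{r \in R} q^{-r} \ge 0$ by the tail bound, contradicting maximality (with a lexicographic tie-break to cover the equality case $q = 2$). The delicate point is exactly verifying that removing $R$ destroys \emph{all} new progressions while keeping the weight bookkeeping nonnegative.

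Finally I would record the matching lower bound for the middle assertion. Evaluating the window sum along $N = (3^t-1)/2$, whose ternary digits are all equal to $1$, the reflection $d \mapsto N - d$ is an involution of $A_3^*(\Z) \cap [0,N]$, since subtracting a string of $0$'s and $1$'s from the all-$1$'s string requires no borrowing. Hence $\sum_{e \in E_N} q^{-e} = \sum_{d \in A_3^*(\Z),\, d \le N} q^{-d} \to \prod_{i=0}^\infty(1+q^{-3^i})$ as $t \to \infty$, giving $\overline{d}(S(T_{3,q})) \ge m_q$. Combined with the general upper bound, this yields $\overline{d}(S(T_{3,q})) = m_q$ and the supremum statement at once.
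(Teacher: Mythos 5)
Your proposal is correct and follows essentially the same route as the paper's proof: the correspondence between geometric progressions in $\F_q[x]$ and arithmetic progressions of degrees, the reflection symmetry of $A_3^*(\Z)\cap[0,N_k]$ at $N_k=(3^k-1)/2$ for the lower bound, and the weight-domination inequality $q^{-j}\ge\sum_{e>j}q^{-e}$ driving a greedy/exchange argument for the upper bound. The only real difference is organizational: the paper maximizes the count window-by-window (include all polynomials of top degree, then greedily work downward, which by reflection gives $M-(A_3^*\cap[0,M])$), whereas you reflect first and prove a single infinite extremal lemma via compactness and an exchange step --- a more detailed write-up of the optimality claim the paper asserts tersely, and your observation that an AP-free set must omit one of any three consecutive integers in fact makes the exchange strictly improving, removing the need for your lexicographic tie-break at $q=2$.
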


\begin{proof}

If the elements $b(x),\ r(x)b(x),\ r(x)^2b(x) \in \F_q[x]$ are in geometric progression, then the degrees of these polynomials form an arithmetic progression $\deg(b)$, $\deg (b) + \deg(r)$, $\deg(b) + 2\deg(r)$ and, equivalently, their norms form a geometric progression, $N(b)=q^{\deg(b)}$, $N(rb) =q^{\deg(b)+\deg(r)}$, $N(r^2b) =q^{\deg(b)+2\deg(r)}$ in $\mathbb{Z}$. Since the set $T_{3,q}$ has no 3-term geometric progressions, $S(T_{3,q})$ cannot have any 3-term geometric progressions either.

We will first show $S(T_{3,q})$ has upper density at least $m_q$. For any $k \geq 1$ define


\begin{equation}N_k \ := \  \sum_{i=0}^{k-1} 3^i = \frac{3^k-1}{2}, \end{equation}
(note that each $N_k$ is an integer which has only 1's in its ternary expansion) and consider the quantity $|S(T_{3,q}) \cap S(q^{N_k})|/|S(q^{N_k})|$:
\begin{align}
\frac{|S(T_{3,q}) \cap S(q^{N_k})|}{|S(q^{N_k})|} &\ = \ \frac{1}{q^{N_k+1}}\sum_{\substack{n \in A_3^*(\Z)\\ n \leq N_k}} |S(q^n)| \notag \\
&\ = \ \frac{1}{q^{N_k+1}}\sum_{\substack{n \in A_3^*(\Z)\\ n \leq N_k}} \left(q^n - q^{n-1}\right) \notag \\
 &\ = \ \sum_{\substack{n \in A_3^*(\Z)\\ n \leq N_k}} \left(q^{-N_k + n} - q^{-N_k + n - 1}\right).
\end{align}

Now, the key observation is that for these choices of $N_k$, the sets $A_3^*(\Z) \cap [0,N_k]$ have an important symmetry: $A_3^*(\Z) \cap [0,N_k] \ = \ N_k - {(A_3^*(\Z) \cap [0,N_k])}$. This follows from the characterization in Lemma \ref{Lemma:Arithgreedy}. Thus
\begin{eqnarray}\label{Equation:DensityuptoNk}
\frac{|S(T_{3,q}) \cap S(q^{N_k})|}{|S(q^{N_k})|} & \ = \ & \sum_{n \in A_3^*(\Z) \cap [0,N_k]} \left(q^{-N_k + n} - q^{-N_k + n - 1}\right) \nonumber\\ &  \ = \ & \sum_{n \in A_3^*(\Z) \cap [0,N_k]} \left(q^{-n} - q^{-n - 1}\right).
\end{eqnarray}

Another way to view this is that the set $S(T_{3,q}) \cap S(q^{N_k})$ is equal to the set obtained by constructing a subset of the elements of $\mathbb{F}_q[x]$ of norm at most $q^{N_k}$ by working backwards, starting with elements of largest possible norm and greedily including those that do not form a geometric progression.

Because $S(T_{3,q}) \cap S(q^{N_k})$ is an initial subset of the set $S(T_{3,q})$, we have that
\begin{align} \label{equation:denst3q} \overline{d}(S(T_{3,q}))  & \ = \ \limsup \frac{|S(T_{3,q}) \cap S(N)|}{|S(N)} \\
& \ \ge \ \lim_{k \to \infty} \frac{|S(T_{3,q}) \cap S(q^{N_k})|}{|S(q^{N_k})|} 
 \ = \ \sum_{n \in A_3^*(\Z)} \left(q^{-n} - q^{-n - 1}\right) \nonumber \\
& \ = \ \left(1-\frac{1}{q}\right)\left(\sum_{n \in A_3^*(\Z)} q^{-n}\right)  \ = \ \left(1 - \frac{1}{q} \right) \prod_{i = 0}^{\infty} \left(1 + q^{-3^i}\right)\notag \\
&\ = \  \left(1 - \frac{1}{q^2} \right) \prod_{i = 1}^{\infty} \left(1 + q^{-3^i}\right) \ = \ m_q.\end{align}

Now we show that the upper density of $S(T_{3,q})$ is at most $m_q$ by showing that any geometric-progression free set of the form $S(X)$ has upper density at most $m_q$.  Fix $X \subset \N$ such that $S(X)$ is free of geometric progressions, and let $A = \{n \in \Z_{\geq 0}\mid q^n \in X\}$ be the set of degrees of the polynomials included in $S(X)$.  As argued before, the set of integers in $A$ must be free of arithmetic progressions in order for $S(X)$ to be free of geometric progressions.   

For any integer $M>0$ consider $A\cap [0,M]$: the set of integers in $A$ up to $M$.  Note that the number ($q^{n+1}-q^n$) of polynomials of degree $n$ in $\F_q[x]$  is greater than or equal to the number of polynomials of all lower degrees combined ($q^n$).  Thus we see that $|S(X)\cap S(q^m)|$ is maximized when $A\cap [0,M] = M - (A_3^* \cap [0,M])$, because the largest possible set is obtained by including all polynomials of degree $M$, and subsequently greedily including polynomials of the largest possible degree that can be added without introducing a geometric progression.  Thus 
\begin{align}\frac{|S(X) \cap S(q^M)|}{|S(q^m)|} \leq \frac{1}{q^{M+1}}\sum_{\substack{i \in A_3^*(\Z) \\ i\leq M}} q^{m-i+1}-q^{m-i} = \sum_{\substack{i \in A_3^*(\Z)\\i\leq M}}\left( q^{-i}-q^{-i-1}\right),\end{align}
and so 
\begin{align}
\overline{d}(S(X))   \ = \ \limsup \frac{|S(X) \cap S(q^M)|}{|S(q^M)|}  \ \leq \ \sum_{\substack{i \in A_3^*(\Z)}} \left(q^{-i}-q^{-i-1}\right) = m_q,
\end{align}
as seen in \eqref{equation:denst3q}.  Therefore, in particular, $\overline{d}(S(T_{3,q})) \leq m_q$. 
\end{proof}

As an immediate corollary, we get a lower bound on the upper densities of sets avoiding $3$-term geometric progressions in $\F_q[x]$.  Note, however, that any such set with higher upper density could not be of the form $S(X)$ for any $X \subset \N$.

\begin{cor} The supremum of the upper densities of sets avoiding $3$-term geometric progressions in $\F_q[x]$ is greater than or equal to
\begin{align} \label{Equation:LowerBound}
m_q &\ = \ \left(1-q^{-2}\right) \prod_{i = 1}^{\infty}\left(1+q^{-3^i}\right) \\
&\ = \ (1 - q^{-2}) + (q^{-3} - q^{-5}) + (q^{-9} - q^{-11}) + (q^{-12} - q^{-14}) + \cdots. \nonumber
\end{align}
\end{cor}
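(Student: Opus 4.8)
The plan is to read the corollary off Theorem~\ref{Theorem:T3q} essentially for free, since all of the analytic content has already been established there. First I would observe that the theorem exhibits an \emph{explicit witness}: the set $S(T_{3,q})$ is free of $3$-term geometric progressions and has upper density exactly $m_q$. The quantity we are bounding is the supremum of $\overline{d}(A)$ over all geometric-progression-free subsets $A \subseteq \F_q[x]$, and $S(T_{3,q})$ is one such subset. Hence its upper density $m_q$ is, by definition of supremum, a lower bound for that supremum, which is exactly the asserted inequality. No further construction is needed; the corollary simply records that the lower bound produced by Theorem~\ref{Theorem:T3q} survives when we enlarge the competition to all progression-free sets (not merely those of the form $S(X)$).

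It then remains only to verify the displayed series expansion of $m_q$, which is a routine bookkeeping step. I would expand the product $\prod_{i = 1}^{\infty}\left(1+q^{-3^i}\right)$ as a sum $\sum_m q^{-m}$ over those nonnegative integers $m$ that are sums of \emph{distinct} powers $3^i$ with $i \ge 1$; by Lemma~\ref{Lemma:Arithgreedy} these are precisely the integers whose ternary expansions use only the digits $0$ and $1$ and have vanishing units digit, namely $m \in \{0,3,9,12,27,\dots\}$. Multiplying through by the leading factor $(1-q^{-2})$ then replaces each term $q^{-m}$ by $q^{-m}-q^{-m-2}$, which yields the grouping $(1 - q^{-2}) + (q^{-3} - q^{-5}) + (q^{-9} - q^{-11}) + (q^{-12} - q^{-14}) + \cdots$ stated in \eqref{Equation:LowerBound}.

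I do not expect any genuine obstacle: the substance lives entirely in Theorem~\ref{Theorem:T3q}, and the corollary is a repackaging of its conclusion together with an elementary expansion of the Euler-type product. The one point I would flag, rather than prove, is the scope of the claim. This argument certifies $m_q$ only as a \emph{lower} bound for the supremum over all progression-free sets; the second half of Theorem~\ref{Theorem:T3q} shows $m_q$ is optimal among the restricted family of sets $S(X)$ with $X \subseteq \N$, but it leaves open whether some progression-free set not of this norm-determined form could achieve a strictly larger upper density. This is precisely why the corollary asserts an inequality rather than an equality, and it is the reason for the accompanying remark that any improving set must fail to be of the form $S(X)$.
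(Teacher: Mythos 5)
Your proposal is correct and matches the paper's treatment: the paper gives no separate proof, presenting the corollary as an immediate consequence of Theorem~\ref{Theorem:T3q} via exactly the witness-set argument you describe, and your expansion of $\left(1-q^{-2}\right)\prod_{i=1}^{\infty}\left(1+q^{-3^i}\right)$ into paired terms indexed by integers with ternary digits in $\{0,1\}$ and vanishing units digit correctly accounts for the displayed series. Your closing remark on scope also mirrors the paper's own caveat that any set exceeding $m_q$ could not be of the form $S(X)$.
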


Some values are computed in Table \ref{lowerbdtable}.

\begin{table}[h!]
\caption{Lower bounds $m_q$ on the supremum of upper densities of subsets of $\F_q[x]$ avoiding 3-term progressions, calculated to 6 decimal places.}
\label{lowerbdtable}
\begin{tabular}{ |c | c |}
\hline
$q$ & Lower Bd \\
\hline
2 & .845398\\
\hline
4 & .952152 \\
\hline
8 & .986298 \\
\hline
\end{tabular} \ \ \ \ \
\begin{tabular}{ |c | c |}
\hline
$q$ & Lower Bd \\
\hline
\ 3 & .921858 \\
\hline
\ 9 & .989009 \\
\hline
27 & .998679\\
\hline
\end{tabular} \ \ \ \ \
\begin{tabular}{ |c | c |}
\hline
$q$ & Lower Bd \\
\hline
\ \ 5 & .96768 \\
\hline
\ 25 & .998464 \\
\hline
125 & .999937 \\
\hline
\end{tabular} \ \ \ \ \
\begin{tabular}{ |c | c |}
\hline
$q$ & Lower Bd \\
\hline
\ \ 7 & .982448 \\
\hline
\ 49 & .999592 \\
\hline
343 & .999992 \\
\hline
\end{tabular}
\end{table}

\section{Upper Bounds for the Supremum of Upper Densities}

In order to obtain upper bounds for the upper density of a set of polynomials avoiding 3 term geometric progressions, we again use techniques analogous to those developed by Riddell\cite{Rid} over the integers.  To compute an upper bound for this upper density, we show that a certain proportion of elements must be excluded in order to avoid three term progressions, using the fact that the minimum non-unit norm is $q$.

\subsection{An Upper Bound for the Upper Density}

Fix an element $r(x) \in \F_q[x]$ of norm $q$. The proportion of elements of $\F_q[x]$ that are coprime to $r(x)$ is $(q-1)/q$. Now consider $S(q^n)$. For each $b(x) \in S(q^n)$ such that $b(x)$ is coprime to $r(x)$ and $N(b) \le q^{n-2}$, we have that $b(x),\ r(x)b(x),\ r(x)^2b(x)$ forms a progression, and that all of these progressions $\{b,\ rb,\ r^2b\}$ are disjoint for different choices of $b(x)$. Thus, each $b(x)$ corresponds to a unique element (one of the elements $b(x),\ r(x)b(x),\ r(x)^2b(x)$) which must be excluded in order to avoid geometric progressions.

This can be extended further by including additional progressions. If again $b(x)$ is coprime to $r(x)$ and also $N(b) \le q^{n-5}$, then $\{br^3,\ br^4,\ br^5\}$ forms another progression. Each one of these new progressions is likewise disjoint for different choices of $b$ as well as from the previously mentioned progressions. Taking these two sets of progressions into account and letting $n \to \infty$, we get an upper bound of
\begin{equation} \lim_{n \to \infty} 1 - \left(\frac{q-1}{q}\right) \left(\frac{\left|S(q^{n-2})\right| + \left|S(q^{n-5})\right|}{\left|S(q^{n})\right|}\right) \ = \ 1 - \left(\frac{q-1}{q}\right) (q^{-2} + q^{-5})\end{equation}
for the supremum of upper densities of geometric progression free subsets of $\F_q[x]$. By continuing in this way, including additional non-overlapping progressions we can improve this bound to
\begin{equation}\label{Equation:Upperbd1}
 1 - \left(\frac{q-1}{q}\right) \left(\frac{1}{q^2}\right) \left(\sum_{i  = 0}^{\infty} \frac{1}{q^{3i}} \right)\ = \ 1 - \left(\frac{q-1}{q^3}\right)\left(\frac{1}{1 - \frac{1}{q^3}}\right) \ = \ 1 - \frac{q-1}{q^3 - 1}.
\end{equation}

\sloppy{We remark that this can be improved further by applying the methods of Nathanson and O'Bryant \cite{NO2}.  In particular, one observes that removing one out of every three consecutive integers is still insufficient to avoid arithmetic progressions in the exponents of $r$, and thus to avoid progressions involving $r$.  Taking into account all progressions involving powers of the element $r$ as in \cite{NO} we obtain the upper bound
\begin{equation}\label{Equation:NObound}
 (q-1)\sum_{n=1}^\infty \frac{1}{q^{r_n}},
\end{equation} 
where $r_n$ is the least integer such that there exists a subset of $[1,r_n]$ of size $n$. (The sequence of $r_n$ begins $(r_n)=  	1, 2, 4, 5, 9, 11, 13, 14, 20, \ldots$)}

This results in a small numerical improvement in the upper bounds.  Some values are computed in Table \ref{upperbd1table}.

\begin{table}[ht!]
\caption{Upper bounds obtained using \eqref{Equation:Upperbd1} and 
\eqref{Equation:NObound} on the supremum of upper densities of subsets of 
$\F_q[x]$ avoiding 3-term progressions, rounded to 9 decimal places, and compared to 
the lower bounds from \eqref{Equation:LowerBound}.}
\label{upperbd1table}
\begin{tabular}{ |c | c | c| c| }
\hline
$q$ & Upper Bd \eqref{Equation:Upperbd1} & Upper Bound  \eqref{Equation:NObound} & Lower Bound \eqref{Equation:LowerBound}\\
\hline
2 & 0.857142857 & 0.846375541 & 0.845397956 \\
\hline
3 & 0.923076923 & 0.921925273 & 0.921857532 \\
\hline
4 & 0.952380952 & 0.952160653 & 0.952152070 \\
\hline
5 & 0.967741935 & 0.967682134 & 0.967680495 \\
\hline
7 & 0.982456140 & 0.982447941 & 0.982447814 \\
\hline
8 & 0.986301370 & 0.986297660 & 0.986297615 \\
\hline
9 & 0.989010989 & 0.989009149 & 0.989009131 \\
\hline
11 & 0.992481203 & 0.992480647 & 0.992480643 \\
\hline
13 & 0.994535519 & 0.994535314 & 0.994535313 \\
\hline
16 & 0.996336996 & 0.996336937 & 0.996336937 \\
\hline
17 & 0.996742671 & 0.996742630 & 0.996742630 \\
\hline
19 & 0.997375328 & 0.997375307 & 0.997375307 \\
\hline
23 & 0.998191682 & 0.998191675 & 0.998191675 \\
\hline
25 & 0.998463902 & 0.998463898 & 0.998463898 \\
\hline
 \end{tabular}
\end{table}

\section{Future Work}

Naturally, one would like to be able to further tighten these bounds by either finding sets avoiding geometric progressions with larger densities and upper densities or by improving the upper bounds.  We end with a few ideas for how this might be done, and additional areas to explore.  

In the integer case \cite{Mc} it has been shown that it is possible to construct sets with higher upper density than the greedy set (but only when avoiding progressions with integer ratio).  In particular, by looking at the small primes 2, 3 and 5, it is possible to obtain a small improvement on the greedy set by removing those elements with only a single power of 2, 3 or 5, which allows other elements to be included later on.  It seems possible that a similar construction might yield a set with higher asymptotic density in $\F_q[x]$. 

\begin{question} Can a set be constructed to avoid 3-term progressions in $\F_q[x]$ with density greater than $G_{3,q}^*$? If so, can this be done for all $q$ or only for $q$ small enough/large enough?
\end{question}

In previous work, \cite[Section 4]{Mc}, \cite[Section 4.2]{BHMMPTW} in different rings, improved upper bounds for the upper density were obtained by considering progressions among the smooth integers.  Thus far this technique has not proven to be as useful in this ring, however it may just be that more work and computation are required.  

\begin{question} Can bounds for progressions among the smooth polynomials (those without any irreducible factors of high degree) be used to obtain better upper bounds for the upper density of a subset of $\F_q[x]$ avoiding 3-term geometric progressions?
\end{question}

In general, one might even hope that the additional combinatorial structure of the ring $\F_q[x]$ might allow one to work out exactly what the supremum of the upper densities of sets avoiding 3-term progressions is, even though this appears to be far more elusive over the integers. From the tightness of our bounds and the natural construction of $S(T_{3,q})$, we propose:

\begin{conjecture} The supremum of upper densities of sets avoiding 3-term geometric progressions in $\F_q[x]$ is $m_q$.
\end{conjecture}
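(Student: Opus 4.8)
The lower bound $\sup \ge m_q$ is already in hand from Theorem~\ref{Theorem:T3q} and \eqref{Equation:LowerBound}, so the entire content of the conjecture is the matching upper bound: every $A \subseteq \F_q[x]$ free of $3$-term non-unit geometric progressions satisfies $\overline{d}(A) \le m_q$. My plan is to set up a valuation-by-valuation decomposition and then confront the one place where the known arguments lose. Fix a monic irreducible $P$ and write each $f$ uniquely as $f = P^k g$ with $P \nmid g$; record the ``$P$-column over $g$'' by its exponent set $B_g := \{k : P^k g \in A\}$. Since $g, Pg, P^2 g, \dots$ is a geometric progression with ratio a power of $P$, each $B_g$ must be free of $3$-term arithmetic progressions. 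This is the only structural input used in the Riddell and Nathanson--O'Bryant estimates, and it is a genuine necessary condition for $A$ to be progression-free.

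Carrying this single-prime bookkeeping through the density, using $N(P) = q$ and the fact that a degree-$d$ polynomial has $P$-valuation exactly $v$ for a proportion $(1-q^{-1})q^{-v}$ of such polynomials, one is led to
\[
\overline{d}(A) \ \le\ (1-q^{-1})^2 \sum_{j=0}^{\infty} q^{-j}\, a_3(j+1),
\]
where $a_3(N)$ is the largest size of a $3$-AP-free subset of an $N$-term interval. A standard Abel summation recovers a bound of the same strength as the Nathanson--O'Bryant estimate \eqref{Equation:NObound}; as Table~\ref{upperbd1table} shows, this is \emph{strictly} larger than $m_q$, so a single prime cannot suffice. The reason is structural: the single-prime bound is saturated only when the columns $B_g$ are chosen \emph{independently} to be maximal (Behrend-type) $3$-AP-free sets, whereas the extremal norm-based set $S(T_{3,q})$ forces every column to be the \emph{correlated} shift $\{k : k\deg P + \deg g \in A_3^*(\Z)\}$ dictated by total degree. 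The discrepancy between \eqref{Equation:NObound} and $m_q$ is precisely the gap between ``each column independently fat'' and ``all columns tied to the norm.''

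The plan to close this gap is to use several primes at once. A putative $A$ with $\overline{d}(A) > m_q$ would, at infinitely many scales, have to make a positive proportion of its $P$-columns genuinely fatter than the $A_3^*(\Z)$-pattern; but fatness means including $P^a g$ and $P^b g$ with $b-a$ large, and combined with fatness in a second prime $Q$ this manufactures progressions of mixed ratio $P^c Q^d$ that the single-prime count never billed for. I would attempt to make this rigorous in one of two ways: \emph{(i)} an averaging/symmetrization over the action of $\mathrm{PGL}_2(\F_q)$ (or of the Frobenius) that permutes the degree-$1$ primes, replacing $A$ by a set of the same upper density depending only on the multiset of exponents, and then invoking the ``$S(X)$'' half of Theorem~\ref{Theorem:T3q}; or \emph{(ii)} a direct double count that charges each mixed-ratio progression against the column-fatness that created it, showing that the aggregate saving over all pairs of primes consumes exactly the surplus over $m_q$.

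The main obstacle is making this multi-prime interaction quantitatively sharp. Symmetrization is delicate because averaging a progression-free set need not preserve progression-freeness, and the exponent multiset alone does not determine membership once units and distinct primes interact. The direct double count, on the other hand, is an optimization over the infinite-dimensional lattice $\bigoplus_P \Z_{\ge 0}$ of exponent vectors, constrained to avoid $3$-term arithmetic progressions along \emph{every} line of rational nonnegative direction simultaneously, and I know no clean extremal principle forcing the norm-based diagonal to win by exactly the margin $m_q$. This is precisely why the statement is offered as a conjecture: all present techniques see only one prime at a time, and the residual discrepancy in Table~\ref{upperbd1table}, though tiny, is the shadow of this unresolved simultaneous optimization.
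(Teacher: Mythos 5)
The statement you were asked to prove is labelled a \emph{conjecture} in the paper, and the paper contains no proof of it. What the paper establishes is: the lower bound $\sup \ge m_q$ via Theorem~\ref{Theorem:T3q}; the restricted upper bound $\overline{d}(S(X)) \le m_q$ \emph{only} for sets whose membership is determined by the norm (the second half of that theorem); and the unconditional upper bounds \eqref{Equation:Upperbd1} and \eqref{Equation:NObound}, which, as Table~\ref{upperbd1table} shows, strictly exceed $m_q$. After stating the conjecture the authors offer only a heuristic (locate a critical norm $q^B$ where a putative denser set $S$ first disagrees with $S(T_{3,q})$, then analyze the ``factorization pattern'' of the surviving elements), explicitly hedged with ``most likely.'' So there is no paper proof against which to compare yours, and your opening assessment --- that the entire content of the conjecture is the unproven upper bound for \emph{arbitrary} progression-free sets --- is exactly right.

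Judged on its own terms, your proposal is likewise not a proof, and you concede as much; the gap you name is genuine and is precisely why the statement is a conjecture. The parts you do argue are sound: each column $B_g = \{k : P^k g \in A\}$ must be $3$-AP-free, single-prime accounting can never do better than a bound of the strength of \eqref{Equation:NObound}, and that bound sits strictly above $m_q$, so any proof must exploit interactions between distinct primes --- a diagnosis that matches, and sharpens, the paper's own post-conjecture remark. But neither of your two routes is executed, and route \emph{(i)} fails as stated: the norm-preserving automorphisms of $\F_q[x]$ over $\F_q$ are only the affine substitutions $x \mapsto ax+b$, which permute the $q$ monic primes of degree one and nothing else (and the Frobenius fixes $\F_q$-points), so no symmetrization over such a group can convert an arbitrary progression-free set into one of the form $S(X)$, which is the only class of sets the paper's matching upper bound controls. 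Route \emph{(ii)}, the multi-prime double count over the exponent lattice, is exactly the unresolved simultaneous optimization; absent an extremal principle there, the surplus over $m_q$ visible in Table~\ref{upperbd1table} cannot be charged off. In short, your write-up is an accurate and thoughtful account of the obstruction, but the statement remains open both for you and in the paper.
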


A similar trick to the second half of Theorem \ref{Theorem:T3q} proves useful: if the upper density up to some norm $q^N$ of a set $S$ is higher than $m_q$, then there must be some $q^B$ such that all polynomials of norm $q^B$ are in $S(T_{3,q})$ and some, but not all, are in $S$. Then there is some factorization pattern of elements of norm $q^B$ that are all not excluded. Most likely, the divisors of the factorization will either have too many geometric progressions or there will be too few to begin with.

It appears likely that considering progressions among the smooth polynomials would be far more useful in $\F_q[x]$ for studying subsets that avoid progressions with rational ratio (where one allows the geometric progressions to have common ratio in $\F_q(x)$ rather than just $\F_q[x]$).  This version of the problem has also been well studied over the integers, however we have chosen not to consider such progressions here.  This would be interesting to consider in further work as well.

\begin{question} How does the situation described here change if one allows geometric progressions with ratio in $\F_q(x)$? Note that the greedy set $G_{3,q}^*$ constructed here avoids such progressions as well, but the set $S(T_{3,q})$ does not.  Are there examples of sets avoiding such rational-ratio progressions with greater upper density than $G_{3,q}^*$?
\end{question}

\begin{question} What happens if the field $\F_q$ is replaced by a different finite ring?
\end{question}

\end{document}